\newtheorem{theorem}{Theorem}[section]
\newtheorem*{Acknowledgement}{\textnormal{\textbf{Acknowledgement}}}
\newtheorem{corollary}[theorem]{Corollary}
\theoremstyle{definition}
\newtheorem{definition}[theorem]{Definition}
\newtheorem{example}[theorem]{Example}
\newtheorem{Open Prob}[theorem]{Open Problem}
\theoremstyle{remark}
\numberwithin{equation}{section}
\def\DJ{\leavevmode\setbox0=\hbox{D}\kern0pt\rlap{\kern.04em\raise.188\ht0\hbox{-}}D}
\begin{document}

\title[Best proximity results for $p$-proximal contractions]{Best proximity results for $p$-proximal contractions on topological spaces}

\author[ A.\ Bera,  L.K.\ Dey, A. \ Petrusel, A.\ Chanda]
{ Ashis Bera$^{1}$, Lakshmi Kanta Dey$^{2}$, Adrian Petrusel$^{3}$, Ankush Chanda$^{4}$} 

\address{{$^{1}$} Ashis Bera
                    Department of Mathematics,
                    National Institute of Technology
                    Durgapur, India.}
                    \email{beraashis.math@gmail.com}
                   
\address{{$^{2}$} Lakshmi Kanta Dey,
                    Department of Mathematics,
                    National Institute of Technology
                    Durgapur, India.}
                    \email{lakshmikdey@yahoo.co.in}
    \address{{$^{3}$} Adrian Petrusel,
                    Department of Mathematics, Babes-Bolyai University Cluj-Napoca and          Academy of Romanian Scientists Bucharest, Romania.}
\email{petrusel@math.ubbcluj.ro}
                    
\address{{$^{4}$} Ankush Chanda,
                    Department of Mathematics,
                    Vellore Institute of Technology
                    Vellore, India.}
                  \email{ankushchanda8@gmail.com}

\keywords{Best proximity points, topological spaces, $p$-proximal contractions. \\
\indent 2020 {\it Mathematics Subject Classification}. $47$H$09$, $47$H$10$.}

\begin{abstract}
In this article, we investigate for some sufficient conditions for the existence and uniqueness of best proximity points for the topological $p$-proximal contractions and $p$-proximal contractive mappings on arbitrary topological spaces. Moreover, our results are  authenticated by a few numerical examples and these generalize some of the known results in the literature.
\end{abstract}

\maketitle

\setcounter{page}{1}



\section{\bf Introduction and preliminaries}
\baselineskip .55 cm
The metric fixed point theory serves as a consequential tool to get to the bottom of a huge number of applications in many scientific fields, to name a few, mathematics, computer science and economics. One can note that numerous problems related to preceding domains of research can be expressed as equations of the form $Tx= x$, where $T$ is a self-mapping defined in a suitable underlying structure. Whenever $T$ is not a self-mapping, the equation $Tx=x$ does not necessarily possess a solution. In these circumstances, it is worthy to search for an optimum approximate solution which minimizes the error due to approximation. In other words, for a non-self-mapping $T:A\to B$ defined on a metric space, one enquires for an approximate solution $x^*$ in $A$ so that the error $d(x^*, Tx^*)$ is minimum. This solution $x^*$ is said to be a best proximity point of the mapping. The concerning theory dealing with existence of best proximity points is rich enough and for some exciting findings keen readers are referred to \cite{V,KDKC,EV,WPB,SSJ,DD,DM} and references therein. 

Very recently, Raj and Piramatchi \cite{RP} extended the notion of best proximity points from usual metric spaces to arbitrary topological spaces and affirmed certain related results alongside an generalized version of the well-known Edelstein fixed point theorem. Following this direction of research, later on Som et al. \cite{SLDH} proposed couple of new concepts namely topologically Berinde weak proximal contractions and topologically proximal weakly contractive mappings with respect to a real-valued continuous function $g$ defined on $X\times X$ and investigated for sufficient conditions for the existence and uniqueness of best proximity points for the preceding class of mappings.

On the other hand, of late, Altun \cite{AAS} coined the notion of $p$-proximal contractions and $p$-proximal contractive non-self-mappings  on metric spaces considering the concepts of $p$-contractions and $p$-contractive mappings. Firstly, we recollect the definition of a $p$-proximal contraction.
\begin{definition}
Let $T:A\to B$ be a mapping defined on two non-empty subsets $A$ and $B$ of a metric space $(X,d)$. Then $T$ is said to be a $p$-proximal
contraction if there exists $k\in (0,1)$ such that
\[d(u_1, Tx_1)=dist(A,B),~~ d(u_2, Tx_2)=dist(A,B)\]
imply
\[d(u_1,u_2)\leq k \left( d(x_1, x_2)+|d(u_1, x_1)-d(u_2, x_2)|\right) ,\]
for all $x_1,x_2,u_1,u_2 \in A.$
\end{definition}
As a particular case, whenever $A=B=X$, then the previous definition reduces to the collection of self-mappings known as $p$-contractions introduced by Popescu \cite{P} beforehand. In the following, we recall some definitions and notations which are playing crucial roles in this article. For a detailed reading, one is referred to \cite{SLDH}
\begin{definition}
Let $\Xi$ be a topological space and $\Phi:\Xi\times\Xi\to \mathbb{R}$ be a continuous mapping. Let $(\xi_n)$ be a sequence in $\Xi$. Then $(\xi_n)$ is said to be $\Phi$-convergent to $\xi \in \Xi$ if $|\Phi(\xi_n, \xi)|\to 0$ as $n\to \infty$.
\end{definition}
\begin{definition}
Let $\Xi$ be a topological space and $\Phi:\Xi\times\Xi\to \mathbb{R}$ be a continuous mapping. Let $(\xi_n)$ be a sequence in $\Xi$. Then $(\xi_n)$ is said to be $\Phi$-Cauchy to if $|\Phi(\xi_n, \xi_{n+p})|\to 0$ as $n\to \infty~~\mbox{and}~~p=1,2,3\dots$
\end{definition}
\begin{definition}
Let $\Xi$ be a topological space and $\Phi:\Xi\times\Xi\to \mathbb{R}$ be a continuous mapping. Then $\Xi$ is said to be $\Phi$-complete if every $\Phi$-Cauchy sequence $(\xi_n)$ in $\Xi$ is $\Phi$-convergent to an element in $\Xi$.
\end{definition} 
\begin{definition}  \cite{RP}
Let $\Re, \Omega$ be two non-empty subsets of a topological space $\Xi$ and $\Phi:\Xi\times\Xi\to \mathbb{R}$ be a continuous mapping. Consider $$D_\Phi(\Re, \Omega)=\inf\left\{|\Phi(\alpha, \beta)|:\alpha\in\Re, \beta\in\Omega\right\}.$$
\end{definition}
It is clear that if $\Xi$ is a metric space and $\Phi$ is a distance function, then $D_\Phi(\Re, \Omega)$ is the distance between the two aforementioned sets. Here we consider the following two sets which will be important in establishing our findings.
$$\Re_\Phi=\left\{\alpha\in\Re:|\Phi(\alpha, \beta)|=D_\Phi(\Re, \Omega)~~\mbox{ for some}~~\beta\in\Omega\right\},$$
$$\Omega_\Phi=\left\{\alpha\in\Omega:|\Phi(\alpha, \beta)|=D_\Phi(\Re, \Omega)~~\mbox{ for some}~~\beta\in\Re\right\}.$$
\begin{definition} \cite{RP}
Let $\Re, \Omega$ be two non-empty subsets of a topological space $\Xi$ with $\Re_\Phi\neq \phi$ and $\Phi:\Xi\times\Xi\to \mathbb{R}$ be a continuous mapping. Then $(\Re, \Omega)$ is said to satisfy topologically $p$-property if
\begin{equation}
 \left.
        \begin{array}{ll}
            |\Phi(\alpha_1, \beta_1)|=D_\Phi(\Re, \Omega)\\
            |\Phi(\alpha_2, \beta_2)|=D_\Phi(\Re, \Omega)
        \end{array}
    \right\}\Rightarrow |\Phi(\alpha_1, \alpha_2)|=|\Phi(\beta_1, \beta_2)|
\end{equation}
for $\alpha_1, \alpha_2\in \Re$ and $\beta_1, \beta_2\in \Omega.$
\end{definition}
One can note that if $\Xi$ is a metric space and $\Phi$ is a distance function on $\Xi$, then it is the usual $p$-property, which is the fundamental property to obtain the best proximity points for non-self mappings, introduced in \cite{R}. In our next example, we show that a pair of subsets $(\Re, \Omega)$ of a topological space $\Xi$ may satisfy $p$-property with respect to one mapping but may not so for another mapping.
\begin{example}
Consider $\mathbb{R}^2$ with usual topology. Let $\Re=\{2\}\times [-2,0]$ and $\Omega=\{2\}\times [0, 2]$. Let $\mathcal{F}_1:\mathbb{R}^2\times\mathbb{R}^2\to \mathbb{R}$ and $\mathcal{F}_2:\mathbb{R}^2\times\mathbb{R}^2\to \mathbb{R}$ be defined as
\begin{align*}
\mathcal{F}_1\left((\alpha_1, \alpha_2), (\beta_1, \beta_2)\right)=\alpha_2^2-\beta_2^2~~ \mbox{ for all}~~(\alpha_1, \alpha_2), (\beta_1, \beta_2)\in \mathbb{R}^2
\end{align*}
and
\begin{align*}
\mathcal{F}_2\left((\alpha_1, \alpha_2), (\beta_1, \beta_2)\right)=\alpha_2\beta_2~~ \mbox{ for all}~~(\alpha_1, \alpha_2), (\beta_1, \beta_2)\in \mathbb{R}^2
\end{align*}
respectively. Then $\mathcal{F}_1$ is a continuous function and also $D_{\mathcal{F}_1}(\Re, \Omega)=0$. Now it can be easily verified that the  pair $(\Re, \Omega)$ of subsets of a topological space $\Xi$ satisfies $p$-property with respect to the mapping $\mathcal{F}_1$ but does not so for the mapping $\mathcal{F}_2$ though $\mathcal{F}_2$ is continuous function and $D_{\mathcal{F}_2}(\Re, \Omega)=0$. As $\mathcal{F}_2\left((1,-\frac{1}{2}),(1, 0)\right)=0=D_{\mathcal{F}_2}(\Re, \Omega)$ and $\mathcal{F}_2\left((1,-\frac{1}{3}),(1, 0)\right)=0=D_{\mathcal{F}_2}(\Re, \Omega)$ but $\mathcal{F}_2\left((1,-\frac{1}{2}),(1, -\frac{1}{3})\right)\neq \mathcal{F}_2\left((1,0),(1, 0)\right). $
\end{example} 
In the following, we extend the notions of proximal contractions and modified proximal contractions \cite{AAS} in the context of a topological space.
\begin{definition}
Let $\Re, \Omega$ be two non-empty subsets of a topological space $\Xi$ and $\Phi:\Xi\times\Xi\to \mathbb{R}$ be a continuous mapping. A mapping $\mathcal{F}$ is said to be a topologically proximal contraction with respect to $\Phi$ if there exists a real number $c\in(0, 1)$ such that
\begin{equation}
\left.
        \begin{array}{ll}
            |\Phi(\alpha_1,\mathcal{F}( \beta_1))|=D_\Phi(\Re, \Omega)\\
            |\Phi(\alpha_2, \mathcal{F}(\beta_2))|=D_\Phi(\Re, \Omega)
        \end{array}
    \right\}\Rightarrow |\Phi(\alpha_1, \alpha_2)|\leq c|\Phi(\beta_1, \beta_2)|
\end{equation}
for all $\alpha_1, \alpha_2,\beta_1, \beta_2\in \Re.$
\end{definition}
Now for distinct $\beta_1$ and $\beta_2$, we give a modified definition of topological proximal contraction, which as follows:
\begin{definition}
Let $\Re, \Omega$ be two non-empty subsets of a topological space $\Xi$ and $\Phi:\Xi\times\Xi\to \mathbb{R}$ be a continuous mapping. A mapping $\mathcal{F}$ is said to be a modified topologically proximal contraction with respect to $\Phi$ if there exists a real number $c\in(0, 1)$ such that
\begin{equation}\label{equa1}
\left.
        \begin{array}{ll}
            |\Phi(\alpha_1,\mathcal{F}( \beta_1))|=D_\Phi(\Re, \Omega)\\
            |\Phi(\alpha_2, \mathcal{F}(\beta_2))|=D_\Phi(\Re, \Omega)
        \end{array}
    \right\}\Rightarrow |\Phi(\alpha_1, \alpha_2)|\leq c|\Phi(\beta_1, \beta_2)|
\end{equation}
for all $\alpha_1, \alpha_2,\beta_1, \beta_2\in \Re$ with $\beta_1\neq\beta_2.$
\end{definition}
In our next example, we show that every topological proximal contraction is a modified topological proximal contraction but the converse may not hold. 
\begin{example}
Consider $\mathbb{R}^2$ with usual topology. Let $\Re=\{(0,-\frac{1}{2}),(0, \frac{1}{2})\}$ and $\Omega=\{(0,0), (1,\frac{3}{4}), (1, 5)\}$. Let $\Phi:\mathbb{R}^2\times\mathbb{R}^2\to \mathbb{R}$ be a continuous mapping defined as
\begin{align*}
\Phi\left((\alpha_1, \alpha_2), (\beta_1, \beta_2)\right)=(\alpha_1-\beta_1)+(\alpha_2-\beta_2)~~ \mbox{ for all}~~(\alpha_1, \alpha_2), (\beta_1, \beta_2)\in \mathbb{R}^2.
\end{align*}
Then $D_{\Phi}(\Re, \Omega)=\frac{1}{2}$ and define the mapping $\mathcal{F}$ as follows: 
\[\mathcal{F}(x,y)= \left\{\begin{array}{ll}
\left( 1, 1+\frac{y}{2} \right), & \mbox{if $-1\leq y<0$};\\
\left(0, 1-2y\right), & \mbox{if $0\leq y\leq 1$.}\\
\end{array} \right. \]
Then for $\beta_1=\beta_2=(0,\frac{1}{2}) $ and $\alpha_1=(0, \frac{1}{2}), \alpha_2=(0, -\frac{1}{2})$, we get
\begin{equation*}
        \begin{array}{ll}
            |\Phi(\alpha_1,\mathcal{F}( \beta_1))|=D_\Phi(\Re, \Omega)=\frac{1}{2}\\
            |\Phi(\alpha_2, \mathcal{F}(\beta_2))|=D_\Phi(\Re, \Omega)=\frac{1}{2}
        \end{array}
\end{equation*}
but $|\Phi(\alpha_1, \alpha_2)|=1>c|\Phi(\beta_1, \beta_2)|$ for any $c\in (0,1).$ Therefore $\mathcal{F}$ is not a topological proximal contraction. However, we can not find two distinct $\beta_1$ and $\beta_2$ such that $|\Phi(\alpha_1,\mathcal{F}( \beta_1))|=D_\Phi(\Re, \Omega)$ and $ |\Phi(\alpha_2, \mathcal{F}(\beta_2))|=D_\Phi(\Re, \Omega)$ hold but the implication \eqref{equa1} holds for all $\alpha_1,\alpha_2\in\Re.$ Therefore $\mathcal{F}$ is a modified topological proximal contraction.
\end{example} 
\section{\bf Best proximity point theorem on topological spaces}
To begin with, we introduce the notion of the topological $p$-proximal contractions.
\begin{definition}\label{def1}
Let $\Re, \Omega$ be two non-empty subsets of a topological space $\Xi$. A mapping $\mathcal{F}:\Re\to\Omega$ is  said to be a topological $p$-proximal contraction if there exists a real number $c\in (0,1)$ such that
\begin{align}\label{equa2}
\left.
        \begin{array}{ll}
            |\Phi(\alpha_1,\mathcal{F}( \beta_1))|=D_\Phi(\Re, \Omega)\\
            |\Phi(\alpha_2, \mathcal{F}(\beta_2))|=D_\Phi(\Re, \Omega)
        \end{array}
    \right\}\Rightarrow |\Phi(\alpha_1, \alpha_2)|\leq c\left(|\Phi(\beta_1, \beta_2)|+||\Phi(\alpha_1, \beta_1)|-|\Phi(\alpha_2,\beta_2)||\right)
\end{align}
for all $\alpha_1, \alpha_2,\beta_1, \beta_2\in \Re$ with $\beta_1\neq\beta_2$.
\end{definition}
Note that if $\Xi$ is a metric space and $\Phi$ is a distance function on $\Xi$, then $\mathcal{F}$ becomes the usual $p$-proximal contraction, introduced in \cite{AAS}. Additionally, if we consider $\Re=\Omega=\Xi$, then \eqref{equa2} can be written as
\begin{equation*}
\Phi(\mathcal{F}(\beta_1),\mathcal{F}(\beta_2))\leq c\left(|\Phi(\beta_1,\beta_2)|+|\Phi(\beta_1, \mathcal{F}(\beta_2))-\Phi(\beta_2,\mathcal{F}(\beta_2))|\right)
\end{equation*}
for all $\beta_1, \beta_2\in \Xi$. In such cases, the mapping  satisfying the previous contraction is called a $p$-contraction. The following example shows that a mapping $\mathcal{F}:\Re\to\Omega$, where $\Re, \Omega$ are non-empty subsets of a topological space $\Xi$, may be a topological $p$-proximal contraction with respect to a mapping $\Phi:\Xi\times \Xi\to \mathbb{R}$ and may not be such with respect to another mapping $g:\Xi\times \Xi\to \mathbb{R}$.
\begin{example}
Consider $\mathbb{R}^2$ with usual topology. Let $\Re=\left\{\left(0,\frac{t}{2}\right):0\leq t\leq 1\right\}$ and $\Omega=\left\{\left(\frac{2}{3},\frac{s}{2}\right):0\leq s\leq 1\right\}$. We define the mapping $\mathcal{F}:\Re\to\Omega$ as

\[\mathcal{F}(x,y)=\left\{\begin{array}{ll}
\left(1,\frac{y-1}{4}\right), \mbox{if $x=0$},\\
\left(1,\frac{y}{2}\right), \mbox{if $x=\frac{2}{3}$}.\\
\end{array}\right.
\]
Let $\Phi:\Xi\times\Xi\to \mathbb{R}$ defined by
$$\Phi\left((\alpha_1,\alpha_2),(\beta_1,\beta_2)\right)=\alpha_2-\beta_2.$$ Then $\Phi$ is a continuous mapping and $D_\Phi(\Re, \Omega)=0.$ Next we show that $\mathcal{F}$ is a topological $p$-proximal contraction with respect to $\Phi$. Let $\alpha_1=(0,a_1),\alpha_2=(0, a_2), \beta_1=(0, b_1), \beta_2=(0, b_2)\in\Re$ and 
\begin{equation*}
        \begin{array}{ll}
            |\Phi(\alpha_1,\mathcal{F}( \beta_1))|=D_\Phi(\Re, \Omega)\\
            |\Phi(\alpha_2, \mathcal{F}(\beta_2))|=D_\Phi(\Re, \Omega)
        \end{array}
\end{equation*}
gives that
\begin{align*}
\left|\Phi\left((0, a_1), \left(1,\frac{b_1-1}{4}\right)\right)\right|=0\\
\Rightarrow \left| a_1-\frac{b_1-1}{4}\right|=0.
\end{align*}
Similarly, we get $\left| a_2-\frac{b_2-1}{4}\right|=0.$
Then
\begin{align*}
\left|\Phi(\alpha_1,\alpha_2)\right|&=\left|\frac{b_1-1}{4}-\frac{b_2-1}{4}\right|\\
&\leq\frac{2}{3}|b_1-b_2|+\frac{2}{3}\left\|a_1-b_1|-|a_2-b_2|\right|\\
&= c\left(|\Phi(\beta_1, \beta_2)|+||\Phi(\alpha_1, \beta_1)|-|\Phi(\alpha_2,\beta_2)||\right)
\end{align*}
where $c=\frac{2}{3}$. This shows that $\mathcal{F}$ is a topological $p$-proximal contraction with respect to $\Phi$. On the other hand, let $g:\mathbb{R}^2\times\mathbb{R}^2\to \mathbb{R}$ defined by $g\left((\alpha_1,\alpha_2),(\beta_1,\beta_2)\right)=\alpha_2+\beta_2.$ It can be seen that $D_g(\Re, \Omega)=0$. Let $\alpha_1=(0,\frac{1}{2}),\alpha_2=(0,0), \beta_1=(0,0),\beta_2=(0,\frac{1}{4})\in \Re$, the we get $\left|g\left(\alpha_1,\mathcal{F}(\beta_1)\right)\right|=0$ and $\left|g\left(\alpha_2,\mathcal{F}(\beta_2)\right)\right|=0$. Therefore it is easy to verify that there does not exist any $c\in(0,1)$ such that 
$$|g(\alpha_1, \alpha_2)|\leq c\left(|\Phi(\beta_1, \beta_2)|+||\Phi(\alpha_1, \beta_1)|-|\Phi(\alpha_2,\beta_2)||\right)$$ for all $\alpha_1, \alpha_2,\beta_1,\beta_2$ with $\beta_1\neq\beta_2$. This shows that $\mathcal{F}$ is not a topological $p$-proximal contraction with respect to $g$.
\end{example}
In the subsequent example, we show that there exists a topological space $\Xi$ and a mapping $\mathcal{F}:\Re\to \Omega$, where $\Re, \Omega$ are non-empty subsets of $\Xi$, such that $\mathcal{F}$ is a topological $p$-proximal contraction with respect to a continuous real-valued function $\Phi$. But if the topological space is metrizable with respect to a metric $d$, then the mapping $\mathcal{F}$ is not a $p$-proximal contraction with respect to the metric $d$.
\begin{example}
Consider $\mathbb{R}$ with the usual topology. Let $\Phi:\mathbb{R}\times\mathbb{R}\to \mathbb{R}$ be defined by
$$\Phi(u, v)=u^2-v^2,~~\mbox{for all}~ u, v\in \mathbb{R}.$$ Then $\Phi$ is a continuous function. Let $\Re=\{t:0\leq t\leq 1\}$ and $\Omega=\{s:0\leq s\leq 2\}$ and $\mathcal{F}:\Re\to\Omega$ be defined as $\mathcal{F}(x)=\frac{x}{2},$ $x\in \Re$. Then it can be easily verified that $D_\Phi(\Re,\Omega)=0$. Let $\alpha_1=\frac{1}{2},\alpha_2=\frac{1}{4},\beta_1=1,\beta_2=\frac{1}{4}$. Therefore,
\begin{equation*}
        \begin{array}{ll}
            |\Phi(\alpha_1,\mathcal{F}( \beta_1))|=D_\Phi(\Re, \Omega)=0\\
            |\Phi(\alpha_2, \mathcal{F}(\beta_2))|=D_\Phi(\Re, \Omega)=0.
        \end{array}
\end{equation*}
Then it can be verified that for $c=\frac{1}{4}$,
$$\frac{3}{16}=|g(\alpha_1, \alpha_2)|\leq c\left(|\Phi(\beta_1, \beta_2)|+||\Phi(\alpha_1, \beta_1)|-|\Phi(\alpha_2,\beta_2)||\right)$$ for all $\alpha_1, \alpha_2,\beta_1,\beta_2$ with $\beta_1\neq\beta_2$. Thus $\mathcal{F}$ is a topological $p$-proximal contraction with respect to $\Phi$. On the other hand, let $d$ be the usual metric on $\mathbb{R}$ and $D(\Re,\Omega)=\inf\{d(u,v):u\in\Re,v\in \Omega\}$. Therefore $D_d(\Re,\Omega)=0$. Hence  it can be easily verified that $\mathcal{F}$ is not a $p$-proximal contraction with respect to $d$ for $c=\frac{1}{4}$.
\end{example} 
Now we are in a position to present a best proximity point theorem concerning the newly introduced topological $p$-proximal contractions on an arbitrary topological space.
\begin{theorem}\label{thm1}
Let $\Xi$ be a $\Phi$-complete topological space, where $\Phi:\Xi\times\Xi\to \mathbb{R}$ is a continuous function such that $\Phi(x,y)=0\Leftrightarrow x=y,$ $\|\Phi(x,y)\|=\|\Phi(y,x)\|$ and $\|\Phi(x,z)\|\leq \|\Phi(x,y)\|+\|\Phi(y,z)\| $ for all $x,y,z\in \Xi$. Let $\Re,\Omega$ be non-empty, $\Phi$-closed subsets of $\Xi$ such that $\Omega$ is approximately $\Phi$-compact with respect to $\Re$. Let $\mathcal{F}:\Re\to\Omega$ be a topological $p$-proximal contraction with respect to $\Phi$ such that $\mathcal{F}(\Re_\Phi)\subseteq \Omega_\Phi$ and $\Re_\Phi$ is non-empty. Then $\mathcal{F}$ has a unique best proximity point.
\end{theorem}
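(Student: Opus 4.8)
The plan is to run a Picard-type iteration, show the orbit is $\Phi$-Cauchy, pass to its $\Phi$-limit, and then use approximate $\Phi$-compactness together with the contraction inequality \eqref{equa2} to identify that limit as a best proximity point; uniqueness follows directly from \eqref{equa2}.

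\emph{Construction of the orbit.} Starting from any $x_0\in\Re_\Phi$ (possible since $\Re_\Phi\neq\emptyset$), the inclusion $\mathcal{F}(x_0)\in\mathcal{F}(\Re_\Phi)\subseteq\Omega_\Phi$ gives some $x_1\in\Re$ with $|\Phi(x_1,\mathcal{F}(x_0))|=D_\Phi(\Re,\Omega)$, and by the very definition of $\Re_\Phi$ this $x_1$ again lies in $\Re_\Phi$; iterating produces $(x_n)\subseteq\Re_\Phi$ with $|\Phi(x_{n+1},\mathcal{F}(x_n))|=D_\Phi(\Re,\Omega)$ for all $n$. If $x_{n+1}=x_n$ for some $n$, then $x_n$ is already a best proximity point and we are done, so assume $x_{n+1}\ne x_n$ throughout. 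Put $d_n=|\Phi(x_{n+1},x_n)|$ and apply \eqref{equa2} to $(\alpha_1,\beta_1)=(x_{n+1},x_n)$ and $(\alpha_2,\beta_2)=(x_n,x_{n-1})$ (legitimate since $x_n\ne x_{n-1}$): this yields $d_n\le c\bigl(d_{n-1}+|d_n-d_{n-1}|\bigr)$. If $d_n\ge d_{n-1}$ this forces $d_n\le c\,d_n$, hence $d_n=0$, a contradiction; so $d_n<d_{n-1}$, and the inequality rearranges to $d_n\le\tfrac{2c}{1+c}\,d_{n-1}$. Since $c\in(0,1)$, the factor $\lambda:=\tfrac{2c}{1+c}$ lies in $(0,1)$, so $d_n\le\lambda^n d_0\to0$, and the triangle inequality for $|\Phi(\cdot,\cdot)|$ gives $|\Phi(x_n,x_{n+p})|\le\sum_{j\ge n}\lambda^j d_0\to0$; thus $(x_n)$ is $\Phi$-Cauchy.

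\emph{Passage to the limit.} By $\Phi$-completeness, $(x_n)$ is $\Phi$-convergent to some $x^*\in\Xi$, and $x^*\in\Re$ because $\Re$ is $\Phi$-closed. From $D_\Phi(\Re,\Omega)\le|\Phi(x^*,\mathcal{F}(x_n))|\le|\Phi(x^*,x_{n+1})|+|\Phi(x_{n+1},\mathcal{F}(x_n))|=|\Phi(x^*,x_{n+1})|+D_\Phi(\Re,\Omega)$ and $|\Phi(x^*,x_{n+1})|\to0$ we get $|\Phi(x^*,\mathcal{F}(x_n))|\to D_\Phi(\Re,\Omega)$. Approximate $\Phi$-compactness of $\Omega$ with respect to $\Re$ then produces a subsequence with $\mathcal{F}(x_{n_k})$ $\Phi$-convergent to some $y^*\in\Omega$ ($\Omega$ being $\Phi$-closed), and $|\Phi(x^*,y^*)|\le|\Phi(x^*,\mathcal{F}(x_{n_k}))|+|\Phi(\mathcal{F}(x_{n_k}),y^*)|\to D_\Phi(\Re,\Omega)$, so $|\Phi(x^*,y^*)|=D_\Phi(\Re,\Omega)$. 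Hence $x^*\in\Re_\Phi$, whence $\mathcal{F}(x^*)\in\mathcal{F}(\Re_\Phi)\subseteq\Omega_\Phi$ and there is $u^*\in\Re$ with $|\Phi(u^*,\mathcal{F}(x^*))|=D_\Phi(\Re,\Omega)$.

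\emph{Identification and uniqueness.} It remains to show $u^*=x^*$, which gives $|\Phi(x^*,\mathcal{F}(x^*))|=D_\Phi(\Re,\Omega)$. If $x_{n_k}=x^*$ for all large $k$, then $|\Phi(x^*,\mathcal{F}(x^*))|=|\Phi(x_{n_k+1},\mathcal{F}(x_{n_k}))|$ up to the error $|\Phi(x^*,x_{n_k+1})|\to0$, so the conclusion is immediate; otherwise, after passing to a sub-subsequence with $x_{n_k}\ne x^*$, apply \eqref{equa2} to $(\alpha_1,\beta_1)=(u^*,x^*)$ and $(\alpha_2,\beta_2)=(x_{n_k+1},x_{n_k})$ and let $k\to\infty$: using $|\Phi(x^*,x_{n_k})|\to0$, $d_{n_k}\to0$ and $|\Phi(u^*,x_{n_k+1})|\to|\Phi(u^*,x^*)|$ one gets $|\Phi(u^*,x^*)|\le c\,|\Phi(u^*,x^*)|$, so $u^*=x^*$. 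For uniqueness, if $x^*\ne z^*$ are both best proximity points, \eqref{equa2} with $\alpha_1=\beta_1=x^*$ and $\alpha_2=\beta_2=z^*$ gives $|\Phi(x^*,z^*)|\le c\,|\Phi(x^*,z^*)|$, hence $x^*=z^*$. The step I expect to be the main obstacle is the one in the previous paragraph: extracting, purely from the contraction inequality and approximate $\Phi$-compactness, that the $\Phi$-limit $x^*$ actually lies in $\Re_\Phi$ with proximity partner $\mathcal{F}(x^*)$, while ensuring throughout that the restriction $\beta_1\ne\beta_2$ in Definition \ref{def1} never obstructs the argument (handled by the dichotomies above).
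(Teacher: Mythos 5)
Your proposal is correct and follows essentially the same route as the paper's proof: the same Picard-type orbit with $|\Phi(x_{n+1},\mathcal{F}(x_n))|=D_\Phi(\Re,\Omega)$, the same dichotomy yielding the contraction ratio $\tfrac{2c}{1+c}\in(0,1)$, the same use of $\Phi$-completeness and approximate $\Phi$-compactness to place the limit $x^*$ in $\Re_\Phi$, and the same identification and uniqueness arguments via \eqref{equa2}. If anything, you are slightly more careful than the paper in handling the degenerate case $x_{n_k}=x^*$ (which the paper dismisses with a ``without loss of generality''), but the substance is identical.
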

\begin{proof}
Let $\xi_0\in\Re_\Phi$ be an arbitrary element. Since $\mathcal{F}(\xi_0)\in\mathcal{F}(\Re_\Phi)\subseteq\Omega_\Phi$, there exists $\xi_1\in \Re_\Phi$ such that $|\Phi(\xi_1, \mathcal{F}(\xi_0))|=D_\Phi(\Re,\Omega)$. Similarly as $\mathcal{F}(\mathcal{F}(\xi_1))\in\mathcal{F}(\Re_\Phi)\subseteq\Omega_\Phi$, there exists $\xi_2\in \Re_\Phi$ such that $|\Phi(\xi_2, \mathcal{F}(\xi_1))|=D_\Phi(\Re,\Omega)$. Proceeding in this way, we can construct a sequence $(\xi_n)$ in $\Re_\Phi$ such that
\begin{equation}\label{equa3}
|\Phi(\xi_{n+1}, \mathcal{F}(\xi_n))|=D_\Phi(\Re,\Omega)~~\mbox{for all}~~n\in\mathbb{N}.
\end{equation}
Now, if there exists an $n\in\mathbb{N}$ such that $\xi_n=\xi_{n+1}$, then it is clear that $\xi_n$ is the best proximity point of the mapping $\mathcal{F}$. Let we assume that $\xi_n\neq\xi_{n+1}$ for all $n\in \mathbb{N}$. From the construction of the sequence, we have
\begin{equation*}
\begin{array}{ll}
|\Phi(\xi_n,\mathcal{F}( \xi_{n-1}))|=D_\Phi(\Re, \Omega)\\
|\Phi(\xi_{n+1}, \mathcal{F}(\xi_n))|=D_\Phi(\Re, \Omega),
\end{array}
\end{equation*}
for all $n\in\mathbb{N}$. As $\mathcal{F}$ is a topological $p$-proximal contraction with respect to $\Phi$, we have
\begin{align*}
|\Phi(\xi_n,\xi_{n+1})|\leq c\left(|\Phi(\xi_{n-1},\xi_n)|+\left||\Phi(\xi_{n-1},\xi_n)|-|\Phi(\xi_{n+1},\xi_n)|\right|\right)
\end{align*}
for all $n\in\mathbb{N}$ and $c\in (0,1)$. Let us suppose that $|\Phi(\xi_{n-1},\xi_n)|\leq |\Phi(\xi_{n+1},\xi_n)|$ for some $n\in\mathbb{N}$. Then
\begin{align*}
|\Phi(\xi_n,\xi_{n+1})|&\leq c\left(|\Phi(\xi_{n-1},\xi_n)|+|\Phi(\xi_{n+1},\xi_n)|-\Phi(\xi_{n-1},\xi_n)|\right)\\
&\leq c |\Phi(\xi_{n+1},\xi_n)|,
\end{align*} 
which is a contradiction. Thus the only possibility is $|\Phi(\xi_n,\xi_{n+1})|\leq |\Phi(\xi_{n-1},\xi_{n})|$ for some $n\in\mathbb{N}$. Then
\begin{align*}
|\Phi(\xi_n,\xi_{n+1})|&\leq c\left(|\Phi(\xi_{n-1},\xi_n)|+|\Phi(\xi_{n-1},\xi_n)|-|\Phi(\xi_{n+1},\xi_n)|\right)\\
&\leq c\left(2|\Phi(\xi_{n-1},\xi_n)|-|\Phi(\xi_{n+1},\xi_n)|\right)\\
(1+c)|\Phi(\xi_n,\xi_{n+1})|& \leq c\left(2|\Phi(\xi_{n-1},\xi_n)|\right)\\
|\Phi(\xi_n,\xi_{n+1})|& \leq \frac{2c}{1+c}\left|\Phi(\xi_{n-1},\xi_n)\right|\\
&\leq \left(\frac{2c}{1+c}\right)^2\left|\Phi(\xi_{n-2},\xi_{n-1})\right|\\
& \vdots\\
&\leq \left(\frac{2c}{1+c}\right)^n \left|\Phi(\xi_0,\xi_1)\right|.
\end{align*}
Suppose that $m>n$ and $n\in\mathbb{N}$. Let $m=n+p$ where $p\geq 1$. Then by the given condition and the above inequality, we have
\begin{align*}
|\Phi(\xi_n,\xi_{n+p})|&\leq |\Phi(\xi_n, \xi_{n+1})|+|\Phi(\xi_{n+1}, \xi_{n+2})|+|\Phi(\xi_{n+1}, \xi_{n+2})|+\dots+|\Phi(\xi_{n+p-1}, \xi_{n+p})|\\
& \leq \left(\frac{2c}{1+c}\right)^n |\Phi(\xi_0,\xi_1)|+ \left(\frac{2c}{1+c}\right)^{n+1} |\Phi(\xi_0,\xi_1)|\\&+\left(\frac{2c}{1+c}\right)^{n+2} |\Phi(\xi_0,\xi_1)|+\dots+\left(\frac{2c}{1+c}\right)^{n+p-1} |\Phi(\xi_0,\xi_1)|\\
&= \left\{1+\frac{2c}{1+c}+\left(\frac{2c}{1+c}\right)^2+\dots+\left(\frac{2c}{1+c}\right)^{p-1}\right \}\left(\frac{2c}{1+c}\right)^n|\Phi(\xi_0,\xi_1)|\\
&=\left(\frac{2c}{1+c}\right)^n \frac{1-\left(\frac{2c}{1+c}\right)^p}{1-\frac{2c}{1+c}}|\Phi(\xi_0,\xi_1)|.
\end{align*}
Taking limit on both sides on the above equation, we get
$\displaystyle{\lim_{n\to\infty}}|\Phi(\xi_n, \xi_{n+p})|\to 0$ as $n\to\infty$. Therefore, the sequence $(\xi_n)$ is a $\Phi$-Cauchy sequence. Since $\Xi$ is $\Phi$-complete and $\Re$ is $\Phi$-closed subset of $\Xi$, there exists a point $\xi^*\in \Re$ such that $|\Phi(\xi_n,\xi^*)|\to 0$ for all $n\in\mathbb{N}$. Again from \eqref{equa3} and given condition, we get
\begin{align*}
|\Phi(\xi^*, \mathcal{F}(\xi_n))|&\leq |\Phi(\xi^*, \xi_{n+1})|+|\Phi(\xi_{n+1}, \mathcal{F}(\xi_n))|\\
&\leq |\Phi(\xi^*, \xi_{n+1})|+ D_\Phi(\Re, \Omega)\\
&\leq |\Phi(\xi^*, \xi_{n+1})|+ D_\Phi(\xi^*, \Omega).
\end{align*}
Taking limit on both sides, we have $|\Phi(\xi^*,\mathcal{F}(\xi_n))|\to|\Phi(\xi^*, \Omega)|$ as $n\to \infty$. Since $\Omega$ is approximately $\Phi$-compact with respect to $\Re$, there exists a subsequence $\left(\mathcal{F}(\xi_{n_k})\right)$ of $\left(\mathcal{F}(\xi_n\right))$ such that $\left|\Phi\left(\mathcal{F}(\xi_{n_k}),\gamma\right)\right|\to 0$ as $k\to \infty$ for some $\gamma\in\Omega$. Then
\begin{align*}
|\Phi(\xi^*,\gamma)|&\leq \lim_{n\to\infty}\left[|\Phi(\xi^*, \xi_{n_{k+1}})|+|\Phi( \xi_{n_{k+1}}, \mathcal{F}(\xi_{n_k}))|+|\Phi( \mathcal{F}(\xi_{n_k}), \gamma)|\right]\\
&=\lim_{n\to\infty}|\Phi( \xi_{n_{k+1}}, \mathcal{F}(\xi_{n_k}))|= D_\Phi(\Re, \Omega).
\end{align*}
Therefore, $|\Phi(\xi^*,\gamma)|=D_\Phi(\Re, \Omega)$ implies $\gamma\in \Re_\Phi$. Also since $\mathcal{F}(\Re_\Phi)\subseteq \Omega_\Phi$, there exists an element $\lambda\in \Re_\Phi$ such that 
\begin{align}\label{equa4}
|\Phi(\lambda, \mathcal{F}(\xi^*))|=D_\Phi(\Re,\Omega).
\end{align}
Now we can assume without loss of generality that $\xi^*\neq\xi_{n}$ for all $n\in \mathbb{N}$. Then from \eqref{equa3}, \eqref{equa4} and the definition of topological $p$-proximal contraction, we get
\begin{align*}
|\Phi(\xi_{n+1}, \lambda)|\leq c\left(|\Phi(\xi_n, \xi^*)|+\left||\Phi(\xi_{n+1}, \xi_n)|-|\Phi(\lambda, \xi^*)|\right|\right)
\end{align*}
for all $n\in \mathbb{N}$. Taking limit both sides, we obtain
\begin{align*}
|\Phi(\xi^*, \lambda)|\leq c |\Phi(\lambda, \xi^*)|,
\end{align*}
which gives that $\xi^*=\lambda$. Therefore from \eqref{equa4} we conclude that $\xi^*$ is the best proximity point of the mapping $\mathcal{F}$. For uniqueness, let us assume that there are two different best proximity points $\xi^*$ and $\lambda^*$ of the mapping $\mathcal{F}$ on $\Re$. Then we get
\begin{equation*}
        \begin{array}{ll}
            |\Phi(\xi^*,\mathcal{F}(\xi^*))|=D_\Phi(\Re, \Omega)\\
            |\Phi( \lambda^*, \mathcal{F}(\lambda^*))|=D_\Phi(\Re, \Omega).
        \end{array}
\end{equation*}
Since $\mathcal{F}$ is a topological $p$-proximal contraction, then we have
\begin{align*}
|\Phi(\xi^*, \lambda^*)|&\leq c\left(|\Phi(\xi^*, \lambda^*)|+\left||\Phi(\xi^*,\xi^*)|-|\Phi(\lambda^*, \lambda^*)|\right|\right)\\
&=c|\Phi(\xi^*, \lambda^*)|,
\end{align*}
which is a contradiction. Therefore $\lambda^*=\xi^*$. This completes the proof.
\end{proof}
Next we illustrate an example in support of the previously discussed Theorem \ref{thm1}.
\begin{example}
Consider $\mathbb{R}^2$ with usual topology and $\Xi=\{0\}\times [-1, 1]$ with subspace topology. Let $\Re=\left\{\left(0,t\right):-1\leq t\leq 0\right\}$ and $\Omega=\left\{\left(0,s\right):0\leq s\leq 1\right\}$. We define the mapping $\mathcal{F}:\Re\to\Omega$ as
\[\mathcal{F}(x,y)=\begin{array}{ll}
\left(0,\frac{y}{4}\right), x\in \Re.
\end{array}
\]
Let $\Phi:\Xi\times\Xi\to \mathbb{R}$ defined by
$$\Phi\left((\xi_1,\xi_2),(\lambda_1,\lambda_2)\right)=\lambda_2-\xi_2.$$
Then $\Phi$ is a continuous mapping on $\Xi\times \Xi$ and $D_\Phi(\Re, \Omega)=0.$ Now we show that $\Xi$ is $\Phi$-complete. Let $((0, \xi_n ))$ be a $\Phi$-Cauchy sequence in $\Xi$. Then $|\Phi((0,\xi_n), (0,\xi_{n+p}))|\to 0$ as $n\to \infty~~\mbox{and}~~p=1,2,3,\dots$ which implies $|\xi_n-\xi_{n+p}|\to 0$ as $n\to \infty~~\mbox{and}~~p=1,2,3,\dots$ So the sequence $(\xi_n)$ is a Cauchy sequence in $[0, 1]$. Since $[-1, 1]$ is complete, so there exists $\xi\in [-1, 1]$ such that $\xi_n\to \xi$ as $n\to \infty$. Then $|\Phi\left((0,\xi_n), (0, \xi)\right)|=|\xi_n-\xi|\to 0$ as $n\to \infty$. This shows that $((1,\xi_n))$ is $\Phi$-convergent to $(0, \xi)\in \Xi$. Therefore $\Xi$ is $\Phi$-complete. Now, let $(0,\xi)\in \Re_\Phi$, then there exists $(0, \lambda)\in \Omega$ such that $\left|\Phi\left((0, \xi),(0, \lambda)\right)\right|=0$ implies $|\xi-\lambda|=0$. This is satisfied only when $\xi=\lambda=0$. Therefore $\Re_\Phi=\{(0,0)\}$ and also $\Omega_\Phi=\{(0,0)\}$. Thus $\Re_\Phi$ is non-empty, $\Phi$-closed and also $\mathcal{F}(\Re_\Phi)\subseteq \Omega_\Phi$. Let $(0,\xi_1), (0,\xi_2),(0,\lambda_1), (0,\lambda_2)\in \Re$ with $(0,\lambda_1)\neq (0,\lambda_2)$ such that
\[|\Phi((0,\xi_1),\mathcal{F}(0,\lambda_1)|=D_\Phi(\Re, \Omega)=0\]
and
\[|\Phi( (0,\xi_2), \mathcal{F}(0,\lambda_2))|=D_\Phi(\Re, \Omega)=0.\]
That is $|\xi_1-\frac{\lambda_1}{4}|=0$ and $|\xi_2-\frac{\lambda_2}{4}|=0$. Now,
\begin{align*}
\left|\Phi\left((0,\xi_2),(0,\xi_2)\right)\right|&=|\xi_1-\xi_2|=\frac{1}{4}|\lambda_1-\lambda_2|\\
& \leq \frac{1}{4}|\lambda_1-\lambda_2|+\frac{1}{4}\\
&\leq \frac{1}{4}\left(|\lambda_1-\lambda_2|+\left||\xi_1-\lambda_1|-|\xi_2-\lambda_2|\right|\right)\\
&= c\left(|\Phi(\lambda_1,\lambda_2)|+\left||\Phi(\xi_1,\lambda_1)|-|\Phi(\xi_2,\lambda_2)|\right|\right),
\end{align*}
for $c=\frac{1}{4}$. This assures that the mapping $\mathcal{F}$ is a topological $p$-proximal contraction with respect to $\Phi$. Also all the hypotheses of the Theorem \ref{thm1} are satisfied. Therefore, we can conclude that the mapping $\mathcal{F}$ has a unique best proximity point $(0,0)$. 
\end{example}
In the subsequent discussion, we first coin the concept of topological $p$-contractive mappings and further, confirm a result of best proximity point theorem related to topological $p$-proximal contractive mappings on topological spaces.
\begin{definition}\label{def2}
Let $\Re, \Omega$ be two non-empty subsets of a topological space $\Xi$. A mapping $\mathcal{F}:\Re\to\Omega$ is  said to be a topological $p$-proximal contractive mapping if 
\begin{equation}\label{equa5}
\left.
        \begin{array}{ll}
            |\Phi(\alpha_1,\mathcal{F}( \beta_1))|=D_\Phi(\Re, \Omega)\\
            |\Phi(\alpha_2, \mathcal{F}(\beta_2))|=D_\Phi(\Re, \Omega)
        \end{array}
    \right\}\Rightarrow |\Phi(\alpha_1, \alpha_2)|< |\Phi(\beta_1, \beta_2)|+||\Phi(\alpha_1, \beta_1)|-|\Phi(\alpha_2,\beta_2)||
\end{equation}
for all $\alpha_1, \alpha_2,\beta_1, \beta_2\in \Re$ with $\beta_1\neq\beta_2$.
\end{definition}
Note that if $\Xi$ be a metric space and $\Phi$ be a metric  on $\Xi$, then the Definition \ref{def2} reduces to the notion of usual $p$-proximal contractive mappings. Also if $\Re=\Omega=\Xi$, then the inequality \eqref{equa5} can be written as
\begin{equation*}
|\Phi(\mathcal{F}(\beta_1),\mathcal{F}(\beta_2))|<|\Phi(\beta_1,\beta_2)|+||\Phi(\beta_1, \mathcal{F}(\beta_2))|-|\Phi(\beta_2,\mathcal{F}(\beta_2))||
\end{equation*}
for all $\beta_1, \beta_2\in \Xi$. 

Now, we are in a position to secure a best proximity point theorem for topological $p$-proximal contractive mappings. Further, we give an example to validate our finding.
\begin{theorem}\label{thm2}
Let $\Xi$ be a  topological space, where $\Phi:\Xi\times\Xi\to \mathbb{R}$ is a continuous function satisfying the following
$|\Phi(x,y)|=|\Phi(y,x)|$ and
 $|\Phi(x,z)|\leq |\Phi(x,y)|+|\Phi(y,z)| $ \mbox{for all} $x,y,z\in \Xi$. Let $\Re,\Omega$ be non-empty subsets of $\Xi$ and $\mathcal{F}:\Re\to\Omega$ be a topological $p$-proximal contractive mapping with respect to $\Phi$. Assume that $(\Re,\Omega)$ has the topological $p$-property with respect to $\Phi$ and $\mathcal{F}(\Re_\Phi)\subseteq \Omega_\Phi$. If there exists $\xi, \gamma\in\Re_\Phi$ such that
\begin{align}\label{equa6}
|\Phi(\xi, \mathcal{F}(\lambda))|=D_\Phi(\Re, \Omega)
\end{align} and
\begin{align}\label{equa7}
|\Phi(\xi, \lambda)|\leq |\Phi(\mathcal{F}(\xi),\mathcal{F}( \lambda))|.
\end{align}
Then $\mathcal{F}$ has a unique best proximity point.
\end{theorem}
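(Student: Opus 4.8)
The plan is to show that the two distinguished points supplied by \eqref{equa6}--\eqref{equa7} are necessarily equal, so that \eqref{equa6} itself exhibits a best proximity point; in particular, neither $\Phi$-completeness nor any compactness of $\Omega$ plays a role here. Write $\lambda$ for the second point of $\Re_\Phi$ occurring in \eqref{equa6}--\eqref{equa7}, i.e.\ assume there are $\xi,\lambda\in\Re_\Phi$ with $|\Phi(\xi,\mathcal{F}(\lambda))|=D_\Phi(\Re,\Omega)$ and $|\Phi(\xi,\lambda)|\leq|\Phi(\mathcal{F}(\xi),\mathcal{F}(\lambda))|$, and suppose towards a contradiction that $\xi\neq\lambda$. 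Since $\xi\in\Re_\Phi$ and $\mathcal{F}(\Re_\Phi)\subseteq\Omega_\Phi$, the point $\mathcal{F}(\xi)$ lies in $\Omega_\Phi$, so by the definition of $\Omega_\Phi$ there exists some $u\in\Re$ with $|\Phi(u,\mathcal{F}(\xi))|=D_\Phi(\Re,\Omega)$.

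The next step is to feed the two relations $|\Phi(u,\mathcal{F}(\xi))|=D_\Phi(\Re,\Omega)$ and $|\Phi(\xi,\mathcal{F}(\lambda))|=D_\Phi(\Re,\Omega)$ into the topological $p$-property: applied to the pairs $(u,\mathcal{F}(\xi))$ and $(\xi,\mathcal{F}(\lambda))$ it yields $|\Phi(u,\xi)|=|\Phi(\mathcal{F}(\xi),\mathcal{F}(\lambda))|$, and then \eqref{equa7} upgrades this to the key inequality $|\Phi(\xi,\lambda)|\leq|\Phi(u,\xi)|$. Now apply the topological $p$-proximal contractive condition \eqref{equa5} with $\beta_1=\xi$, $\beta_2=\lambda$ (legitimate because $\xi\neq\lambda$) and $\alpha_1=u$, $\alpha_2=\xi$; its two hypotheses, $|\Phi(u,\mathcal{F}(\xi))|=D_\Phi(\Re,\Omega)$ and $|\Phi(\xi,\mathcal{F}(\lambda))|=D_\Phi(\Re,\Omega)$, hold, so
\[
|\Phi(u,\xi)|<|\Phi(\xi,\lambda)|+\big||\Phi(u,\xi)|-|\Phi(\xi,\lambda)|\big|.
\]
Since $|\Phi(u,\xi)|\geq|\Phi(\xi,\lambda)|$, the modulus on the right equals $|\Phi(u,\xi)|-|\Phi(\xi,\lambda)|$ and the right-hand side collapses to $|\Phi(u,\xi)|$ --- a contradiction. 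Hence $\xi=\lambda$, and \eqref{equa6} now reads $|\Phi(\xi,\mathcal{F}(\xi))|=D_\Phi(\Re,\Omega)$, so $\xi$ is a best proximity point of $\mathcal{F}$.

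Uniqueness I would handle exactly as in the closing lines of the proof of Theorem \ref{thm1}: if $\eta_1\neq\eta_2$ were two best proximity points, then $|\Phi(\eta_1,\mathcal{F}(\eta_1))|=|\Phi(\eta_2,\mathcal{F}(\eta_2))|=D_\Phi(\Re,\Omega)$, and plugging $\alpha_1=\beta_1=\eta_1$, $\alpha_2=\beta_2=\eta_2$ into \eqref{equa5} forces $|\Phi(\eta_1,\eta_2)|<|\Phi(\eta_1,\eta_2)|$ once the correction term is discarded, which is absurd; thus $\eta_1=\eta_2$. The only genuinely delicate point in the whole argument is the treatment of the term $\big||\Phi(u,\xi)|-|\Phi(\xi,\lambda)|\big|$ in the existence part: one must first pin down the direction $|\Phi(u,\xi)|\geq|\Phi(\xi,\lambda)|$, and this is precisely the purpose of the extra hypothesis \eqref{equa7} after it has been transported through the $p$-property --- it substitutes for the compactness used in the classical Edelstein-type results, where a merely contractive self-map need not have a fixed point.
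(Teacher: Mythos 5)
Your proof is essentially the paper's own: your auxiliary point $u$ with $|\Phi(u,\mathcal{F}(\xi))|=D_\Phi(\Re,\Omega)$ is the paper's $\gamma$ from \eqref{equa8}, the $p$-property identity $|\Phi(u,\xi)|=|\Phi(\mathcal{F}(\xi),\mathcal{F}(\lambda))|$ is \eqref{equa9}, and the use of \eqref{equa7} to resolve the modulus and collapse the contractive inequality to the contradiction $|\Phi(u,\xi)|<|\Phi(u,\xi)|$ is exactly the paper's derivation of $\xi=\lambda$ from \eqref{equa10}. The uniqueness argument is likewise identical to the paper's, and you even reproduce its one soft spot: the term $\bigl||\Phi(\eta_1,\eta_1)|-|\Phi(\eta_2,\eta_2)|\bigr|$ is discarded without justification, which is harmless only if one additionally assumes (as in Theorem \ref{thm1} but not in the hypotheses of Theorem \ref{thm2}) that $\Phi(x,x)=0$.
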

\begin{proof}
Let $\xi, \lambda\in \Re_\Phi$. Since $\mathcal{F}(\Re_\Phi)\subseteq \Omega_\Phi$, then there exists a point $\gamma\in\Re_\Phi$ such that
\begin{align}\label{equa8}
|\Phi(\gamma, \mathcal{F}(\xi))|=D_\Phi(\Re,\Omega).
\end{align}
Employing the fact that $(\Re, \Omega)$ has the topological $p$-property and using \eqref{equa6}-\eqref{equa8}, we have
\begin{align}\label{equa9}
|\Phi(\xi, \gamma)|=|\Phi\left(\mathcal{F}(\lambda), \mathcal{F}(\xi)\right)|.
\end{align}
Let us assume $\xi\neq \lambda$. As $\mathcal{F}$ is a topological $p$-proximal contractive mapping and applying \eqref{equa6}-\eqref{equa8}, we have
\begin{align}\label{equa10}
|\Phi(\xi,\gamma)|<|\Phi(\xi, \lambda)|+\left| |\Phi(\xi,\lambda)|- |\Phi(\gamma, \xi)|\right|.
\end{align}
Then from \eqref{equa10}, by using \eqref{equa7} and \eqref{equa9}, we get
\begin{align*}
|\Phi(\xi,\gamma)|<|\Phi(\xi, \lambda)|+ |\Phi(\gamma, \xi)|-|\Phi(\xi,\lambda)|=|\Phi(\gamma, \xi)|,
\end{align*}
which is a contradiction. Therefore $\xi=\lambda$. That is $\mathcal{F}$ has a best proximity point.
For uniqueness, let us assume that there are two different best proximity points $\xi^*$ and $\lambda^*$ of the mapping $\mathcal{F}$ on $\Re$. Then we get
\begin{equation*}
        \begin{array}{ll}
            |\Phi(\xi^*,\mathcal{F}(\xi^*)|=D_\Phi(\Re, \Omega)\\
            |\Phi( \lambda^*, \mathcal{F}(\lambda^*))|=D_\Phi(\Re, \Omega).
        \end{array}
\end{equation*}
Since $\mathcal{F}$ is a topological $p$-proximal contractive mapping, we have
\begin{align*}
|\Phi(\xi^*, \lambda^*)|&< |\Phi(\xi^*, \lambda^*)|+\left||\Phi(\xi^*,\xi^*|-|\Phi( \lambda^*, \lambda^*|\right|=|\Phi(\xi^*, \lambda^*)|,
\end{align*}
which is a contradiction. Therefore $\lambda^*=\xi^*$. Therefore $\mathcal{F}$ has a unique best proximity point.
\end{proof}
Here we present one corollary of our obtained theorem which is the generalization of Edelstein fixed point theorem \cite{E1} on metric spaces.
\begin{corollary}
If we consider, $|\Phi(\beta_1, \mathcal{F}(\beta_2))|=|\Phi(\beta_2,\mathcal{F}(\beta_2))|$ in Theorem \ref{thm2}, then we get the Edelstein fixed point theorem in the topological space $\Xi$.
\end{corollary}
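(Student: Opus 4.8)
The plan is to specialize Theorem \ref{thm2} to the diagonal case $\Re=\Omega=\Xi$ and to verify that, under the stated relation $|\Phi(\beta_1,\mathcal{F}(\beta_2))|=|\Phi(\beta_2,\mathcal{F}(\beta_2))|$, every hypothesis of that theorem collapses to the classical Edelstein set-up while its conclusion turns into the existence and uniqueness of a fixed point.

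First I would put $\Re=\Omega=\Xi$ and note that, assuming as in Theorem \ref{thm1} (and as is natural in the Edelstein context) that $\Phi(x,y)=0\Leftrightarrow x=y$, one has $D_\Phi(\Xi,\Xi)=\inf\{|\Phi(\alpha,\beta)|:\alpha,\beta\in\Xi\}=0$, since $|\Phi(x,x)|=0$ for every $x$. Consequently $\Re_\Phi=\Omega_\Phi=\Xi$, because for any $\alpha$ the choice $\beta=\alpha$ realises $|\Phi(\alpha,\beta)|=0=D_\Phi(\Xi,\Xi)$. Hence the inclusion $\mathcal{F}(\Re_\Phi)\subseteq\Omega_\Phi$ is automatic, and the topological $p$-property holds trivially: if $|\Phi(\alpha_1,\beta_1)|=|\Phi(\alpha_2,\beta_2)|=0$ then $\alpha_i=\beta_i$, so $|\Phi(\alpha_1,\alpha_2)|=|\Phi(\beta_1,\beta_2)|$. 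Moreover any best proximity point $\xi^*$ then satisfies $|\Phi(\xi^*,\mathcal{F}(\xi^*))|=D_\Phi(\Xi,\Xi)=0$, i.e. $\xi^*=\mathcal{F}(\xi^*)$ is a genuine fixed point.

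Second I would unwind the contractive inequality \eqref{equa5}. With $\Re=\Omega=\Xi$ and $D_\Phi=0$, the premises $|\Phi(\alpha_i,\mathcal{F}(\beta_i))|=D_\Phi$ force $\alpha_i=\mathcal{F}(\beta_i)$, so \eqref{equa5} reduces, for $\beta_1\neq\beta_2$, to $|\Phi(\mathcal{F}(\beta_1),\mathcal{F}(\beta_2))|<|\Phi(\beta_1,\beta_2)|+||\Phi(\beta_1,\mathcal{F}(\beta_2))|-|\Phi(\beta_2,\mathcal{F}(\beta_2))||$, exactly as recorded in the remark following Definition \ref{def2}; imposing the stated relation annihilates the last bracket, leaving precisely the Edelstein strict contractivity $|\Phi(\mathcal{F}(\beta_1),\mathcal{F}(\beta_2))|<|\Phi(\beta_1,\beta_2)|$ for all $\beta_1\neq\beta_2$. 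The remaining clauses \eqref{equa6}--\eqref{equa7} of Theorem \ref{thm2} then play the role of the residual existence ingredient that, in the metric Edelstein theorem, is furnished by compactness. Feeding this into Theorem \ref{thm2} produces a unique best proximity point of $\mathcal{F}$, which by the first step is a unique fixed point of $\mathcal{F}$, namely the Edelstein fixed point theorem of \cite{E1} transplanted to the topological space $\Xi$.

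The only point requiring care is the bookkeeping of hypotheses \eqref{equa6}--\eqref{equa7}: since Theorem \ref{thm2} contains no completeness or compactness assumption, the ``Edelstein theorem'' one literally recovers is the version in which such $\xi,\lambda$ are postulated, and to state the corollary in the familiar closed form one should either keep that clause explicit or deduce it from a compactness (or approximate $\Phi$-compactness) assumption on $\Xi$ together with the continuity of $\Phi$ and $\mathcal{F}$ — this being the genuine analogue of the compactness hypothesis in Edelstein's original statement. I would make this dependence explicit rather than leaving it implicit; all the rest is the routine specialisation described above.
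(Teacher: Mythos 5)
Your specialisation is correct, and it is the argument the corollary intends; the paper itself supplies no proof, treating the reduction as immediate. What you add beyond the paper is the honest bookkeeping: you correctly observe that Theorem \ref{thm2} assumes only symmetry and the triangle inequality for $\Phi$, so the separation property $\Phi(x,y)=0\Leftrightarrow x=y$ (which appears in Theorem \ref{thm1} but not in Theorem \ref{thm2}) must be imported for $D_\Phi(\Xi,\Xi)=0$ to hold, for $\Re_\Phi=\Omega_\Phi=\Xi$, for the premises of \eqref{equa5} to force $\alpha_i=\mathcal{F}(\beta_i)$, and for a best proximity point to be a genuine fixed point --- without it the corollary as stated does not literally follow. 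You are also right that hypotheses \eqref{equa6}--\eqref{equa7} do not disappear under the specialisation and play the role of Edelstein's compactness assumption, so the ``Edelstein theorem'' recovered is the conditional version in which such $\xi,\lambda$ are postulated; the paper glosses over both points. One cosmetic remark: the self-map form of \eqref{equa5} displayed after Definition \ref{def2} contains the term $|\Phi(\beta_1,\mathcal{F}(\beta_2))|$ where the substitution $\alpha_1=\mathcal{F}(\beta_1)$ together with symmetry actually yields $|\Phi(\beta_1,\mathcal{F}(\beta_1))|$; the corollary's hypothesis is phrased to match the former, and your argument goes through either way since in both readings the stated identity annihilates the bracket and leaves the strict Edelstein inequality $|\Phi(\mathcal{F}(\beta_1),\mathcal{F}(\beta_2))|<|\Phi(\beta_1,\beta_2)|$ for $\beta_1\neq\beta_2$.
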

Next we furnish by giving a supporting example of the above Theorem.
\begin{example}
Consider $\mathbb{R}^2$ with usual topology and $\Xi=[-1, 1]\times [-1, 1]$ with subspace topology. Let $\Re=\left\{ 0\right\}\times[-1,0]$ and $\Omega=\left\{ 1\right\}\times [0, 1]$. 
Let $\Phi:\Xi\times\Xi\to \mathbb{R}$ defined by
$$\Phi\left((\xi_1,\xi_2),(\lambda_1,\lambda_2)\right)=\xi_2^2-\lambda_2^2.$$
Then $\Phi$ is a continuous mapping on $\Xi\times \Xi$ and $D_\Phi(\Re, \Omega)=0.$ We define the mapping $\mathcal{F}:\Re\to\Omega$ as
\[\mathcal{F}(0,t)=\begin{array}{ll}
\left(1,-\frac{t}{5}\right),
\end{array}
\]
for all $t\in [-1,0].$ Now it can be easily verified that $\Xi$ is $\Phi$-complete. Now, let $(0,\xi)\in \Re_\Phi$. Then there exists $(0, \lambda)\in \Omega$ such that $\left|\Phi\left((0, \xi),(0, \lambda)\right)\right|=0$ implies $|\xi^2-\lambda^2|=0$. This is satisfied only when $\xi=\lambda=0$. Therefore $\Re_\Phi=\{(0,0)\}$ and also $\Omega_\Phi=\{(0,0)\}$. Thus $\Re_\Phi$ is non-empty, $\Phi$-closed and also $\mathcal{F}(\Re_\Phi)\subseteq \Omega_\Phi$. Then the mapping $\mathcal{F}$ is topological $p$-proximal contractive with respect to $\Phi$ and $(\Re, \Omega)$ satisfies the $p$-property. Additionally for $\xi =\lambda =(0,0)$, the conditions \eqref{equa6} and \eqref{equa7} hold and further, all the hypotheses of Theorem \ref{thm2} are satisfied. Therefore, we can conclude that the mapping $\mathcal{F}$ has a unique best proximity point. 
\end{example}
\begin{Acknowledgement}
The Research is funded by the Ministry of Education, Government of India. 
\end{Acknowledgement}
\bibliographystyle{plain}

\end{document}